\NeedsTeXFormat{LaTeX2e}
\documentclass[reqno]{article}
\usepackage{amssymb, amsmath, amsthm, amsfonts, amscd}
\usepackage[english]{babel}

\usepackage{graphicx}

\usepackage[margin=3.cm]{geometry}
\usepackage{epstopdf}

\usepackage{breqn}
\usepackage{mathtools}
\usepackage{color}
\usepackage{hyperref}
\hypersetup{
    colorlinks=true,
    citecolor=red,
    filecolor=black,
    linkcolor=blue,
    urlcolor=black
}

\chardef\bslash=`\\ 





\hfuzz1pc 

    \newtheorem{theorem}{Theorem}[section]
    \newtheorem{corollary}[theorem]{Corollary}
\newtheorem{lemma}[theorem]{Lemma}
\newtheorem{proposition}[theorem]{Proposition}

\newcommand{\N}{\mathbb{N}}

\newcommand{\Z}{\mathbb{Z}}

\newcommand{\T}{\mathbb{T}}

\newcommand{\Id}{\mathrm{Id}}

\def\a{\alpha }

\def\sm{C^{\infty} }

\def\s{\sigma}
\def\t{\tau}

\def\e{\varepsilon}
\def\f{\varphi}
\def\.{\cdot }
\def\ra{\rightarrow}

\def\begeq{\begin{equation*}}
\def\endeq{\end{equation*}}



\title{
\textsc{\textbf{Furstenberg counterexamples over Diophantine rotations}}\\
\author{Nikolaos Karaliolios}
}

\begin{document}

\maketitle

\begin{abstract}
We construct cocycles in $\T \times SU(2)$ over Diophantine rotations that are minimal
and not uniquely ergodic. Such cocycles are dense in an open subset of cocycles over
the fixed Diophantine rotation. By a standard 
argument, they are dense in the whole set of
such cocycles if the rotation satisfies a full-measure
arithmetic condition.
\end{abstract}  

\tableofcontents

\section{Introduction}

In \cite{Furst61}, Furstenberg provided the first examples of diffeomorphisms that are
not uniquely ergodic, but are topologically minimal. These examples are cocycles in
$\T \times \T$, of the form
\begin{equation*}
(x,y) \mapsto (x+\a , y+ \beta + \f (x))
\end{equation*}
where $\a$ is a Liouville rotation and $\int \f = 0$.

The title of the article is admittedly slightly provocative, since it is known that
Furstenberg's construction does not work in the Abelian setting if $\a$, the rotation in
the basis, is Diophantine. In order to make the construction possible, we thus need the
fiber to be a non-Abelian Lie group, in our case the simplest one, $SU(2)$. Moreover,
by the hypoellipticity property of fibered rotation vectors that are Diophantine with
respect to the rotation in the basis, c.f. \cite{NKRotVec}, the fibered rotation vector
needs to be Liouville with respect to the rotation in the basis. This is in contrast with
the construction of \cite{Furst61}, where the fibered rotation number $\beta$
($\beta = 0$ in the original construction) is irrelevant and
the only relevant property of the dynamics in the fibers is
that the non-linearity $\f$ be the coboundary
of a measurable and not any more regular function.

Our construction is remarkably easy modulo the KAM normal form for almost reducible cocycles, c.f.
\cite{NKInvDist}. The KAM normal form was exploited by the author in \cite{NKInvDist},
\cite{NKContSpec} and \cite{NKRotVec}, which pushed the results of \cite{El2002a}, as
a means of simplifying the fibered Anosov-Katok construction used in the proof of
the existence of cocycles exhibiting the dynamical properties that are the subject of
each of the articles.

Such constructions in general become easier when the rotation in the basis is allowed to
vary, imitating the classical Anosov-Katok construction, \cite{AnKat1970}. This, however,
leads to the constructions of examples over a generic rotation, which should be
expected to be Liouville, as the construction proceeds by periodic approximation.
Our construction proceeds by resonant approximation, i.e. by adding tailored resonant
modes to a cocycle reducible to a resonant constant (a constant whose eigenvalues are
integer multiples of $\a$), while keeping $\a$ fixed.
This is the direct fiber-wise analogue of periodic
approximation when the rotation in the basis is kept fixed.

The dynamics of a cocyle $(\a, A(\.)) \in SW^{\infty}(\T, SU(2)$, where
$A(\.) \in C^{\infty}(\T, SU(2))$, is given by
\begin{equation} \label{eqdef dynamics}
(\a, A(\.)).(x,S) \mapsto (x+\a, A(x).S)
\end{equation}
for any $(x,S) \in \T \times SU(2)$. Conjugation of cocycles is fibered congugation,
just like in Furstenberg's construction for cocycles in $\T \times \T$, acting by
\begeq
(0,B(\.)) \circ (\a, A(\.))\circ (0,B(\.))^{-1}  = (\a,B(\.+\a)A(\.)B^*(\.)).
\endeq
A cocycle is reducible if it can be conjugated (via a conjugation of regularity
to be specified in each context) to a constant cocycle. It is called Almost Reducible
if it can be smoothly conjugated arbitrarily close to constant cocycles. By the
theory developed in \cite{KrikAst} and the author in his thesis, \cite{NKPhD},
Almost Reducible cocycles form an open set in the total function space under a relevant full-measure arithmetic
condition on $\a$, somewhat stricter than a classical
Diophantine one.

In the present article, we will focus on the perturbative
scenario, where the mapping $A(\.) $ is close to a constant, and can thus be written
in the form $A(\.) = A\exp(F(\.)) $ for some smooth mapping $F(\.):\T \ra su(2)$, where
$su(2) $ is the Lie algebra of $SU(2)$.

The entire \S \ref{secexistence} of the paper is devoted to the proof of the following
theorem.

\begin{theorem} \label{main theorem}
For any fixed Diophantine rotation $\a$, minimal and non-uniquely ergodic cocycles are
dense in the open set of Almost Reducible smooth cocycles over $\a$ in $\T \times SU(2)$.
\end{theorem}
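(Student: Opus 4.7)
The overall strategy is a resonant fibered analogue of the Anosov--Katok periodic approximation construction, but with the base rotation $\a$ kept fixed: first reduce to a near-constant model via the KAM normal form of \cite{NKInvDist}, then approximate the constant by a resonant one whose eigenvalues are integer multiples of $\a$, and finally add a small perturbation tailored to create a measurable, but not continuous, invariant section. Since the class of minimal, non-uniquely ergodic cocycles is closed under smooth conjugation, producing such a cocycle in any sufficiently small smooth ball around a given almost-reducible cocycle yields the claimed density.

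\textbf{Normal form and resonant constants.} Let $(\a, A(\.))$ be almost reducible and let $\e > 0$. By \cite{NKInvDist}, there exists a smooth $B$ such that $(\a, B(\.+\a) A(\.) B^*(\.)) = (\a, A_0 \exp F(\.))$ with $A_0 \in SU(2)$ a constant and $\|F\|_{\sm} < \e/2$. The set of constants whose eigenvalues are of the form $e^{\pm 2\pi i k\a}$ for some $k \in \Z$ is dense in $SU(2)$, so up to a further perturbation of size $\e/2$ and an inner $SU(2)$-conjugation I may assume $A_0 = \exp(2\pi k\a J)$ with $J$ a fixed unit-norm Cartan generator of $su(2)$. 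In such coordinates the adjoint action of $A_0$ on the root directions is multiplication by $e^{\pm 2\pi i k\a}$, so the cohomological operator $\xi \mapsto \xi(\.) - \mathrm{Ad}(A_0^{-1})\xi(\.+\a)$ has a nontrivial kernel exactly at the Fourier mode $m = \mp k$ in the root directions. This is the mechanism by which the Diophantine rotation in the base is compatible with a Liouville-type behaviour in the fibers.

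\textbf{Resonant perturbation and the Furstenberg mechanism.} To $A_0$ one adds a perturbation supported on the resonant Fourier modes in the root directions, of the form $F(x) = \f(x)\bigl(\cos(2\pi k x) X + \sin(2\pi k x) Y\bigr)$ with $\{J,X,Y\}$ an orthonormal basis of $su(2)$. Such an $F$ lies in the kernel of the cohomological operator up to corrections of order $\|F\|^2$, which means that the perturbation cannot be removed by a smooth conjugation near the identity but can be resolved measurably. The function $\f$ is chosen in the Furstenberg spirit, small and smooth with zero average, and with a specific Fourier profile forcing a measurable, but not continuous, resolution of the resonance. The resulting cocycle $(\a, A_0 \exp F(\.))$ is then shown to be minimal -- because no continuous trivialisation exists -- and not uniquely ergodic -- because a measurable one does, providing an invariant measure singular with respect to the product of Lebesgue on the base and Haar on the fiber, exactly in the spirit of \cite{Furst61}.

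\textbf{Main obstacle.} The principal difficulty is to verify simultaneously the two key properties of the perturbed cocycle: that no continuous transfer function exists (which yields topological minimality), and that a measurable one does (which yields non-unique ergodicity). The first is delicate because the kernel of the linear cohomological operator at the resonance is only a tangential obstruction; one must propagate it to the full non-linear cocycle, controlling the non-abelian higher-order terms via the iterative KAM scheme of \cite{NKInvDist}. The second requires the explicit construction of a measurable section invariant under the cocycle, analogous to Furstenberg's original discontinuous primitive but carried through the twist introduced by the non-trivial resonance. These two points are the technical heart of \S\ref{secexistence}, and the perturbative size of $F$ together with the convergence of the KAM scheme is what ultimately makes the delicate balance between the measurable solution and the absence of a continuous one sustainable in the smooth category.
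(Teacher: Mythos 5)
Your proposal correctly reduces to the KAM normal form of \cite{NKInvDist}, correctly identifies the mechanism (resonant modes, the base rotation Diophantine but the fibered rotation number Liouville with respect to it), and correctly derives non-unique ergodicity from measurable reducibility. But there is a genuine gap at the heart of the argument, and it is not a technicality that could be filled in with standard tools.

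You write that the resulting cocycle ``is then shown to be minimal -- because no continuous trivialisation exists.'' The implication ``measurably but not $C^{0}$ reducible $\Rightarrow$ minimal'' is \emph{not} a theorem; it is explicitly formulated as an open conjecture in the paper's final section. Using it as a step is circular with respect to what the paper is trying to establish. In general, absence of a continuous transfer function does not prevent a closed proper invariant subset of $\T \times SU(2)$; one needs an actual orbit-density argument. The paper's route is quite different: minimality is reduced to transitivity via the easy observation that the cocycle is a fiberwise isometry over a minimal base rotation, and transitivity is proved \emph{directly} by tracking the orbit of $(0,\Id)$ along a carefully chosen subsequence $n_l$ of return times to $0$ for $\a$ (roughly $n_l = k_l - k_{l-1}$, with a correction) that are \emph{not} return times to $\pm\Id$ for the fibered rotation $\a_\infty$, together with the hypothesis that the algebraic conjugations $\prod_{1}^{i_l} D_j$ have at least two accumulation points in $SO(3)$. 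Along this subsequence the orbit of $(0,\Id)$ accumulates on $\bigl(\prod D_j\bigr)^{*} A_{\infty}^{n_l} \prod D_j$, and the two accumulation points of $\prod D_j$ conjugated against $A_\infty^{n_l}$ produce at least two topological generators of $SU(2)$ in the orbit closure, which then must be all of $\T\times SU(2)$. This is the content of Propositions \ref{prop transitivity} and \ref{prop construction} and is what you would need to supply.

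A secondary but also real problem is that your perturbation adds a \emph{single} resonant mode to the constant $A_0$. A single resonance produces a cocycle that, after one reduction step, is again close to a constant and hence again almost reducible with small non-linearity; it cannot by itself yield measurable-but-not-$C^{0}$ reducibility, let alone two accumulation points of conjugations. The paper needs the full infinite KAM normal form, with resonances $k_i$ at all scales, with the angle sequence satisfying $\theta \in \ell^{2}$ (for measurable reducibility) but $\sum \theta_i$ divergent (so the partial conjugation products do not converge and the transfer function is no better than measurable), and with $\f_i \equiv 0$ so that the $D_i$ live in a one-parameter subgroup where divergence of partial products is controlled. Your single-mode construction and the soft ``resolution of the resonance'' language do not supply this.
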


In order to keep the note short, we will use consistently the notation of the
previous works by the author, \cite{NKInvDist}, \cite{NKContSpec} and \cite{NKRotVec},
which the present article builds upon.

\section{Existence of minimal cocycles} \label{secexistence}

Proving that a cocycle is minimal under the relevant conditions boils down to proving
the following propositions.

First of all, we only need to prove transitivity, thanks to the following easy
proposition.

\begin{proposition}
Let $(\a , A(\. )) \in SW^{0}_{\a} (\T, G)$ be transitive. Then, $(\a , A(\. ))$
is minimal.
\end{proposition}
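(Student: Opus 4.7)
The structural observation I would exploit is that right multiplication by $G$ on the fibres commutes with the cocycle dynamics: for every $g \in G$, the map $R_g(x,S) = (x,Sg)$ is a homeomorphism of $\T \times G$ satisfying $T \circ R_g = R_g \circ T$, where $T$ denotes the action in \eqref{eqdef dynamics}. I plan to use this to promote a single dense forward orbit into an abundance of dense forward orbits, and then reduce minimality to a statement about minimal closed invariant sets.

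First, I would fix a point $(x_0, S_0) \in \T \times G$ whose forward orbit is dense (this exists by the transitivity hypothesis). Since $R_g$ is a homeomorphism commuting with $T$, one immediately has
$\overline{\mathcal{O}^+(R_g(x_0,S_0))} = R_g\bigl(\overline{\mathcal{O}^+(x_0, S_0)}\bigr) = R_g(\T \times G) = \T \times G$,
so \emph{every} point of the fibre $\{x_0\} \times G$ inherits a dense forward orbit.

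To conclude minimality it then suffices to show that the only non-empty closed $T$-invariant subset of $\T \times G$ is the whole space. By Zorn, pick any minimal non-empty closed $T$-invariant set $M$. Its projection $\pi(M) \subset \T$ is closed, non-empty, and invariant under rotation by $\alpha$; since transitivity forces $\alpha$ to be irrational, $\pi(M) = \T$. As $R_g$ commutes with $T$, the union $\bigcup_{g \in G} R_g M$ is compact (continuous image of $G \times M$), $T$-invariant, and its fibre over every $x$ is $M_x \cdot G = G$; hence $\bigcup_{g} R_g M = \T \times G$. In particular $(x_0, S_0) = R_{g_0}(m)$ for some $g_0 \in G$ and $m \in M$, so $(x_0, S_0 g_0^{-1}) \in M$. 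By the previous paragraph this point has dense forward orbit, which must lie in the closed $T$-invariant set $M$; thus $M = \T \times G$, proving minimality.

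The whole argument is a standard structural consequence of the skew-product form and of the commutation of right translations with the dynamics, so I do not anticipate any serious obstacle. The only point deserving care is the implicit use of minimality of the base rotation, which is automatic once $\alpha$ is observed to be irrational --- a condition forced by the transitivity hypothesis itself.
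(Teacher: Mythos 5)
Your argument is correct and is exactly the fleshed-out version of the paper's one-line sketch: the commutation of right translations $R_g$ with the skew product is the concrete form of the "fiber-wise isometry" observation the paper invokes, and you combine it with minimality of the base rotation (noting, rightly, that irrationality of $\alpha$ is itself forced by transitivity) to conclude. No gaps; this is the standard proof that a transitive group extension of a minimal base is minimal.
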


The proof uses the fact that cocycles are fiber-wise isometries, as can be seen
directly from the definition of the dynamics in eq. \eqref{eqdef dynamics}, and
the minimality of the rotation in the basis.

The following corollary is immediate.
\begin{corollary}
In order to establish minimality, we only need to show that the orbit of $(0,\Id )
\in \T\times SU(2)$ is dense.
\end{corollary}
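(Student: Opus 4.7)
The statement is an immediate consequence of the preceding proposition, so the plan is essentially to unwind the definitions. A continuous dynamical system is called transitive when it admits at least one dense orbit, and the preceding proposition just upgraded transitivity to the a priori stronger property of minimality (every orbit is dense) for cocycles $(\a,A(\.))$, by exploiting that the base rotation is minimal and that the fiber action is by isometries of $SU(2)$. Consequently, to prove minimality it suffices to exhibit a single dense orbit.

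The only remaining content of the corollary is therefore the choice of base point at which to look for a dense orbit, and we select $(0,\Id)\in\T\times SU(2)$. The motivation for this choice is purely computational: the forward iterates of $(0,\Id)$ under $(\a,A(\.))$ take the explicit form $(n\a,A_{n}(0))$ with
\begin{equation*}
A_{n}(0)=A((n-1)\a)\.A((n-2)\a)\cdots A(\a)\.A(0),
\end{equation*}
so the minimality problem reduces to the concrete density statement that the sequence $\{(n\a,A_{n}(0))\}_{n\in\N}$ is dense in $\T\times SU(2)$. This is the form in which the density question will be attacked later using the KAM normal form together with the addition of resonant modes.

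There is essentially no obstacle in the corollary itself; it is a formal consequence of the proposition combined with the definition of transitivity. The real work, of course, lies in the density assertion that this reduction produces, which will be established in the remainder of \S \ref{secexistence}.
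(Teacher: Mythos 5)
Your reasoning is correct and coincides with the paper's, which simply declares the corollary ``immediate'' from the transitivity-implies-minimality proposition: a single dense orbit gives transitivity, hence minimality. The choice of base point $(0,\Id)$ is indeed purely for computational convenience in the later normal-form analysis, exactly as you note.
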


The strategy of the construction is given in the following proposition, where we
consider an almost reducible cocycle over $\a \in DC$, given directly in KAM normal
form, and follow the notation of \cite{NKRotVec}. Let us establish some
notation before stating the proposition.

Let $(\a, A^{F(\.)})$ be a cocycle in KAM normal form and suppose that $A$ is diagonal.
The cocycle is conjugate to $(\a, \{e^{2i\pi \a _\infty} ,0\})$ via\footnote{The notation
$\{z,w\}$ for matrices in $SU(2)$ or $su(2)$ amounts to keeping the first line of the
matrix, and is consistent with the previous articles by the author, which the
present work is based upon.} a distribution-valued
transfer function by \cite{NKRotVec}. Suppose, now, return times to $0$ for the rotation
by $\a$, i.e.  $n_l \in \N $ such that 
\begin{equation} \label{eqreturn to 0}
n_l \a \ra 0 \in \T,
\end{equation}
that are not return times to $\pm \Id$ for powers of $A _\infty$, i.e.
\begin{equation} \label{eqnonreturn to 0}
\liminf | n_l \a _\infty | _{\bmod \Z /2} \geq \e>0
\end{equation}
for some fixed $\e$.
We recall, cf. \cite{NKInvDist}, the existence of 
\begin{equation*} \label{eqdef conjugations}
G_i(\.) = B_i (\.) D_i B_i^{*}(\.),
\end{equation*}
the congugations that reduce the successive resonant terms of the KAM normal form,
where $B_i (\. ) = \{ \exp(2i\pi k_i \.) ,0\} $ and
\begin{equation} \label{eqdef Di}
D_i= \{\cos \theta _i/2, e^{2i\pi \f _i} \sin \theta _i/2\}
\end{equation}
with $\theta _i $ and
$\f _i$ the angle and phase defining the KAM normal form (see below). This conjugation gives
\begin{equation*}
G_i (\. +\a ) A_{i}e^{F_{i}(\.)} G^{*}_i(\.) = A_{i+1}e^{F_{i+1}(\.)}
\end{equation*}
with $A_{i} = \{e^{2i\pi a _i},0\}$ a diagonal matrix,
$A_{i}\ra \{e^{2i\pi \a _\infty},0\}$ and $F_i(\.) = O(|k_i|^{-\infty})$ in $\sm$. The
angle $\theta _i$ of the KAM normal form is given by
\begin{equation*}
\tan \theta _i = \frac{|\hat{F}_i (k_i)|}{a_i - k_i \a \mod \Z}
\end{equation*}
and the phase $\f _i$ by
\begin{equation*}
e^{2i\pi \f _i } = \frac{\hat{F}_i (k_i)}{|\hat{F}_i (k_i)|}
\end{equation*}
The KAM normal form is characterized by the fact that the only active Fourier mode of
$F_i(\.)$ up to some growing power of $k_i$ is $k_i$ itself, which implies that
\begin{equation*}
F_i(\.) = \{0, \hat{F}_i (k_i)e^{2i\pi k_i \.} \} + O(|k_i|^{-\infty})
\end{equation*}


Calling $H_i = \prod _{1}^{i}G_j$, and pointing out that
$H_{i_l}(0) = \prod _{1}^{i_l}D_i $, suppose that for the $n_l$ as above, there exists
$i_l \ra \infty$ such that
\begin{equation} \label{eqcvgc conj}
H_{i_l}( n_l \a)\left(\prod _{1}^{i_l}D_i \right)^{*} \ra \Id
\end{equation}
and
\begin{equation*}
n_l \|F_{i_l+1}\|_0 \ra 0 
\end{equation*}
Suppose, finally, that the sequence of partial products $\prod _{1}^{j}D_i$ has at least
two accumulation points in $SO(3) \equiv SU(2) \bmod \pm \Id$ along the subsequence
$n_l$.

\begin{proposition} \label{prop transitivity}
Under the conditions outlined here above, the cocycle 
$(\a, A^{F(\.)}) \in SW^{\infty}(\T, SU(2))$ is transitive.
\end{proposition}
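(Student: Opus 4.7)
The plan is to establish transitivity by studying the fiber over $0$ of the orbit closure of $(0,\Id)$. By the corollary, density of this orbit suffices. Let $\Omega$ denote its closure and $\Omega_0:=\{S\in SU(2):(0,S)\in\Omega\}$. Since right multiplication on the fibers commutes with the cocycle dynamics, $\Omega\cdot\Omega_0\subset\Omega$, so $\Omega_0$ is a closed submonoid of the compact group $SU(2)$ containing $\Id$, hence a closed subgroup; moreover $\Omega=\T\times SU(2)$ is equivalent to $\Omega_0=SU(2)$, which becomes the goal.

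To produce elements of $\Omega_0$, the next step is to iterate the conjugation relation at $x=0$ and $n=n_l$, $i=i_l$, yielding
\begin{equation*}
(A^{F})_{n_l}(0)=H_{i_l}^{*}(n_l\alpha)\,(A_{i_l+1}\,e^{F_{i_l+1}})_{n_l}(0)\,H_{i_l}(0).
\end{equation*}
Hypothesis \eqref{eqcvgc conj} will replace the outer factors by $P_l^{*}$ and $P_l:=\prod_{1}^{i_l}D_j$ up to $o(1)$ in $l$, using $H_{i_l}(0)=P_l$ as noted in the text. The middle factor telescopes to $A_{i_l+1}^{n_l}$ up to an error $O(n_l\|F_{i_l+1}\|_{0})=o(1)$; the super-polynomial KAM convergence of $a_{i_l+1}$ to $\alpha_{\infty}$, together with the same smallness of $n_l\|F_{i_l+1}\|_{0}$, further reduces this factor to $E_l:=\{e^{2i\pi n_l\alpha_{\infty}},0\}$. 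The conclusion is
\begin{equation*}
(A^{F})_{n_l}(0)=P_l^{*}E_l P_l+o(1),
\end{equation*}
so since $n_l\alpha\to 0$, every subsequential limit of the right-hand side belongs to $\Omega_0$.

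The crux will be to pass from this accumulation set to $\Omega_0=SU(2)$. The bound $\liminf|n_l\alpha_{\infty}|_{\bmod\Z/2}\geq\varepsilon$ keeps every limit $E$ of $E_l$ bounded away from $\pm\Id$, and passing to a further subsequence along which $n_l\alpha_{\infty}\bmod 1$ admits an irrational limit ensures such $E$ generates a dense subgroup of the standard torus $T$. Using the two distinct accumulation points $P,P'\in SO(3)$ of $P_l$ furnished by the final hypothesis, extracting further subsequences places $P^{*}TP$ and $(P')^{*}TP'$ inside $\Omega_0$. When these two conjugate tori are genuinely distinct---equivalently, when $P$ and $P'$ lie in distinct cosets of $N(T)/\{\pm\Id\}$ in $SO(3)$---their Lie algebras span $su(2)$, and the classification of proper closed subgroups of $SU(2)$ (finite groups, maximal tori, and normalizers of tori) forces $\Omega_0=SU(2)$. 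The hard part will be verifying that the two accumulation points in $SO(3)$ do not fall into a single $N(T)$-coset, which must be extracted from the explicit form $D_j=\{\cos\theta_j/2,e^{2i\pi\varphi_j}\sin\theta_j/2\}$ and the control over the phases $\varphi_j$ built into the KAM construction, rather than from the bare ``two accumulation points'' hypothesis alone.
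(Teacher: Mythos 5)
Your proposal follows the same computational backbone as the paper's proof: push the orbit of $(0,\Id)$ through the KAM conjugations at times $n_l$ to arrive at the approximate identity $(A^F)^{(n_l)}(0)\approx P_l^*\,A_\infty^{n_l}\,P_l$ with $P_l=\prod_1^{i_l}D_j$, then exploit the non-convergence of both factors. Where you go further is in the endgame: you explicitly note that the fiber $\Omega_0$ of the orbit closure over $0$ is a closed subgroup (via equivariance under right multiplication), and then invoke the classification of closed subgroups of $SU(2)$ to force $\Omega_0=SU(2)$. The paper compresses this into the assertion that the accumulation set contains ``two topological generators of $SU(2)$,'' which is at best informal ($SU(2)$ is not monothetic); your reformulation is cleaner and more rigorous.

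Your closing paragraph correctly identifies a genuine imprecision in the stated hypotheses, and this is the most valuable part of the review. As you observe, ``two accumulation points of $P_l$ in $SO(3)$'' does not by itself yield two distinct conjugate maximal tori: if both accumulation points happen to lie in a single coset of $N(T)$ in $SO(3)$, the conjugated tori coincide and one only deduces a one-torus, or a normalizer of a torus, inside $\Omega_0$. The paper does not address this; it is silently repaired in the next proposition, where $\varphi_i\equiv 0$ and $\sum\theta_i$ divergent with $\theta_i\to 0$ makes the set of accumulation points of $P_l$ dense in $SO(2)$, from which two points in distinct $N(T)$-cosets can obviously be extracted. So your flag is correct at the level of Proposition~\ref{prop transitivity} as stated, but is not fatal for the paper's overall argument. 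A second, related subtlety that your write-up introduces but does not close: you pass to a subsequence along which $n_l\alpha_\infty\bmod 1$ has an \emph{irrational} limit, in order to get a topologically generic element of $T$. The hypothesis $\liminf|n_l\alpha_\infty|_{\bmod\Z/2}\geq\varepsilon$ only keeps the limit away from $0$ and $1/2$; it does not preclude all subsequential limits being, say, $1/3$, in which case $E$ would have finite order and the appeal to density in $T$ would fail. This can be bypassed (a nonabelian closed subgroup of $SU(2)$ containing an element of $T$ of order $\geq 5$ together with an element outside $N(T)$ is already all of $SU(2)$, so one only needs the order of the limit $E$ to be large, which can be arranged in the construction), but as written both your proposal and the paper leave this loose. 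Overall: same route as the paper, more careful group theory, and a correct identification of the weak point in the proposition's hypotheses.
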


The conditions referred to in the statement of the proposition are those of eqs.
\eqref{eqreturn to 0}, \eqref{eqnonreturn to 0} and \eqref{eqcvgc conj},
and the non-convergence of the products $\prod _{1}^{j}D_i$, where the matrices
$D_i$ are defined in eq. \eqref{eqdef Di}.

\begin{proof}
We immediately get that the $n_l$-th iterate of the cocycle satisfies
\begin{equation*}
H_{i_l} (\. + n_l \a ) \left( Ae^{F(\.)} \right)^{(n_l)} H^{*}_{i_l}(\.) =
\left( A_{i_l+1}  e^{F_{i_l+1}(\.)} \right)^{(n_l)}
\end{equation*}
or, equivalently
\begin{equation*}
\left( A_{i}e^{F_{i}(\.)} \right)^{(n_l)}
=
H_i^{*} (\. + n_l \a )  \left( A_{i_l+1}  e^{F_{i_l+1}(\.)} \right)^{(n_l)} H_i(\.) 
\end{equation*}
Calculating the orbit of $(0,\Id)$ gives
\begin{equation*}
\begin{array}{r@{}l}
\left( Ae^{F(\.)} \right)^{(n_l)}(0)
&=
H_{i_l}^{*} (\.+n_l \a )  \left( A_{i_l+1}  e^{F_{i_l+1}(\.)} \right)^{(n_l)}
H_{i_l}(\.)(0) 
\\
&=
H_{i_l}^{*} (n_l \a )  \left( A_{i_l+1}  e^{F_{i_l+1}(\.)} \right)^{(n_l)}(0)
H_{i_l}(0) 
\\
&=
H_{i_l}^{*} (n_l \a )  \left( A_{i_l+1}  e^{F_{i_l+1}(\.)} \right)^{(n_l)}(0)
\prod _{1}^{i_l}D_j 
\\
&\approx
\left( \prod _{1}^{i_l}D_j \right) ^{*}  \left( A_{i_l+1}  e^{F_{i_l+1}(\.)} \right)^{(n_l)}(0)  \prod _{1}^{i_l}D_j 
\\
&\approx
\left( \prod _{1}^{i_l}D_j \right) ^{*}  A_{i_l+1}^{n_l}  \prod _{1}^{i_l}D_j  
\end{array}
\end{equation*}
Since $n_l$ is not a return time to $0$ for $a _\infty$, $ A_{i+1}^{i_l}$ does not
accumulate to $\pm \Id$. This is because
$|a_{i_l+1} - a _\infty| < 2  \|F_{i_l+1}\|_0 $, see the construction of the KAM normal
form in \cite{NKInvDist}. Consequently,
\begin{equation*}
\left( A_{i_l+1}e^{F_{i_l+1}(\.)} \right)^{(n_l)}(0)
\approx
\left( \prod _{1}^{i_l}D_i \right) ^{*}  A_{\infty}^{n_l}  \prod _{1}^{i_l}D_i  
\end{equation*}

Summing up, in the points of accumulation of
$\left( A_{i_l+1}e^{F_{i_l+1}(\.)} \right)^{(n_l)}(0)$, there exist at least two
topological generators of $SU(2)$, since $ A_{\infty}^{n_l} \not \ra \pm \Id$,
$a_{\infty}$ is a minimal rotation in $\mathbb{S}^{1}$ since it is Liouville with
respect to the Diophantine rotation $\a$, and the sequence of algebraic conjugations
$\prod _{1}^{i_l}D_j $ contains at least two distinct accumulation points.
\end{proof}

In a nutshell, the construction consists in considering return times to $0$ for the
rotation in the basis, i.e. $n_l \in \N $ such that $n_l \a \ra 0 \in \T$
along which the conjugations have at least two points of accumulation in
$SU(2) \mod \pm \Id \equiv SO(3)$. If these return times are not return times to $0$
for the rotation corresponding to the fibered rotation number, the result is proved,
since we have obtained that any invariant set containing $(0,\Id)$ also contains two
topological generators of $SU(2)$ in its fiber above $0 \in \T$. The invariant
subset thus contains $\T \times SU(2)$, and transitivity follows, as does minimality.
The crux of the argument thus boils down to finding conditions allowing for the
existence of such a sequence $n_l$ that is compatible with measurable reducibility,
and actually constructing it.

This is done in the next proposition.

\begin{proposition} \label{prop construction}
Let $\a \in DC$. Let, also, $(\a , A(\. ))$ be an almost reducible cocycle, given directly in KAM normal form,
whose angle and phase parameters satisfy
\begin{equation*}
\begin{array}{r@{}l}
\theta &\in \ell ^{2}
\\
\sum \theta _{i} &\text{ diverges}
\\
\f  &\equiv 0
\end{array}
\end{equation*}
and call $k_i$ the resonances of the KAM normal form of the cocycle. Then, the sequence of iterates
\begin{equation*}
n_i = k_i - k_{i-1}
\end{equation*}
satisfies the hypotheses of prop. \ref{prop transitivity} for the full sequence $i$
of the resonances of the KAM normal form.
\end{proposition}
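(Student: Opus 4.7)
The plan is to verify in turn each of the five requirements of Proposition \ref{prop transitivity} for the sequence $n_l=k_l-k_{l-1}$, taking the matching index $i_l=l$, and to proceed from the elementary algebraic conditions to the delicate arithmetic one.

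First I would dispose of the condition on accumulation points of $\prod_1^j D_i$ together with the decay condition $n_l\|F_{l+1}\|_0\to 0$. Because $\f_i\equiv 0$, every $D_i=\{\cos(\theta_i/2),\sin(\theta_i/2)\}$ lies in a common one-parameter subgroup of $SU(2)$, so in $SO(3)$ the partial product $\prod_1^j D_i$ is simply rotation about a fixed axis by angle $\sum_1^j\theta_i$. The hypothesis $\theta\in\ell^2$ forces $\theta_i\to 0$, while $\sum\theta_i=\infty$ makes the partial sums $\sum_1^j\theta_i\pmod{2\pi}$ dense in the circle, producing uncountably many accumulation points. The second condition is automatic, since the normal form gives $\|F_{l+1}\|_0=O(|k_l|^{-\infty})$ while $|n_l|\leq 2|k_l|$.

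Second, I would establish the base-return condition $n_l\a\to 0\in\T$ and the conjugation convergence $H_l(n_l\a)(\prod_1^l D_i)^*\to\Id$ simultaneously. The resonance relation built into the KAM normal form gives $k_i\a\equiv a_i+\delta_i\pmod{\Z}$ with $|\delta_i|$ super-polynomially small in $|k_i|$, so $n_l\a\equiv(a_l-a_{l-1})+(\delta_l-\delta_{l-1})\pmod{\Z}$. The explicit KAM recursion for the diagonal entry in the case $\f_{l-1}=0$ (conjugation by a real rotation of angle $\theta_{l-1}/2$) gives $|a_l-a_{l-1}|=O(\theta_{l-1})$, so $n_l\a\to 0$ with an effective rate. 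For the conjugation convergence, writing $G_j(x)=B_j(x)D_jB_j^*(x)$ one has $\|G_j(n_l\a)-G_j(0)\|=O(|k_j n_l\a|\,\theta_j)$, and the resulting telescoping sum is controlled by $|n_l\a|\sum_{j\leq l}k_j\theta_j$, which tends to $0$ under the lacunary growth of the resonances that is standard in this normal form.

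The main obstacle will be the non-return-in-fiber condition $\liminf|n_l\a_\infty|_{\bmod\Z/2}\geq\e>0$. After reducing $\a_\infty$ to $a_l$ modulo a super-polynomially small correction, the question becomes whether $n_l a_l=(k_l-k_{l-1})a_l$ accumulates at $0\bmod 1/2$. Substituting $a_l=k_l\a-\delta_l$ reduces the problem to studying $(k_l-k_{l-1})k_l\a\bmod 1/2$, which couples the small-divisor structure of $\a$ to the growth of the resonances $k_i$. The conceptual point is that the divergence $\sum\theta_i=\infty$ forbids reducibility of the cocycle, hence by the hypoellipticity result of \cite{NKRotVec} prevents $\a_\infty$ from being commensurate with $\a$ along the resonant times; this rigidity, together with the Diophantine assumption on $\a$, should deliver the uniform lower bound $\e$. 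Executing this step rigorously, by exploiting the explicit recursion for $a_i$ under $\f\equiv 0$, is the delicate arithmetic heart of the proof.
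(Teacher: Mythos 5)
Your treatment of the easier conditions matches the paper: $\f_i\equiv 0$ makes the $D_i$ lie in a one-parameter subgroup so the partial products have (at least two) accumulation points because $\theta_i\to 0$ and $\sum\theta_i=\infty$; and $n_l\|F_{l+1}\|_0\to 0$ follows from $n_l\leq 2k_l$ and $\|F_{l+1}\|_0=O(|k_l|^{-\infty})$. For the base-return condition, the paper argues slightly differently — hypoellipticity forces $|a_\infty-k_i\a|_{\Z/2}=O(|k_i|^{-\infty})$, giving $|n_l\a|_{\Z}=O(|k_{l-1}|^{-\infty})$ directly — but your derivation lands in the same place. Your telescoping estimate for the condition \eqref{eqcvgc conj} is actually more explicit than anything in the paper's proof, which passes over that condition silently; note however that closing the bound $|n_l\a|\sum_{j\leq l}k_j\theta_j\to 0$ is not automatic, because $k_l$ can be chosen super-polynomially large relative to $k_{l-1}$ while $|n_l\a|$ is only controlled by $k_{l-1}$, so some additional constraint on the growth of the resonances has to be imposed for this to hold.

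The genuine gap is in your handling of the fiber non-return condition $\liminf|n_l\a_\infty|_{\bmod\Z/2}\geq\e$, which you correctly identify as the heart of the matter but then attack by the wrong mechanism. You invoke hypoellipticity as a rigidity that ``prevents $\a_\infty$ from being commensurate with $\a$ along the resonant times.'' This reverses the actual role of hypoellipticity in the paper: for a measurably-but-not-smoothly reducible cocycle, hypoellipticity forces $a_\infty$ to be \emph{Liouville} relative to $\a$, i.e.\ $k_i\a$ approximates $a_\infty$ super-polynomially well — this is precisely what makes $n_l\a\to 0$ true, and it pushes \emph{toward}, not away from, the danger that $n_l a_\infty$ might also return to $0$. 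The paper deals with this not by a rigidity argument but by the entirely constructive Lemma \ref{lemma modification}: the sequence $(\theta_i)$, the phases, the resonant amplitudes $|\hat F_i(k_i)|$ and the choice of each $k_{i+1}$ (subject only to $k_{i+1}>k_i^{s_i}$) leave enough free parameters that the bound $|k_i(k_i-k_{i-1})\a|_{\bmod\Z/2}\geq\e$ can be imposed during the construction; and in the residual bad case where $|n_i a_i|\to 0$ it replaces $n_i$ by $k_i^{\t_i}(k_i-k_{i-1})$ for a suitable $0\leq\t_i\leq\t-1$, exploiting the Diophantine bound $|n_i\a|\geq|k_i|^{-\t}$ to lift $n_i k_i\a$ away from $0$. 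Your proposal contains no analogue of this free-parameter/modification argument, so the lower bound you need is asserted rather than obtained, and the conceptual explanation you offer (divergence of $\sum\theta_i$ via hypoellipticity $\Rightarrow$ incommensurability) would, if pushed, point the wrong way.
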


A short lemma will be useful.
\begin{lemma} \label{lemma modification}
The resonances of the KAM normal form of the cocycle satisfy
\begin{equation} \label{eqcond iterates}
|k_i(k_i - k_{i-1})\a |_{\bmod \Z /2} \geq \e
\end{equation}
for some fixed $\e>0$.
\end{lemma}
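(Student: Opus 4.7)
The plan is to expand $k_i(k_i - k_{i-1})\a$ modulo $\Z$ using the KAM normal form recursions, reduce \eqref{eqcond iterates} to a quantitative statement about $|k_i\,\delta_{i-1}|_{\bmod\,\Z/2}$ where $\delta_j := a_j - k_j\a \bmod \Z$, and then exploit the remaining freedom in the choice of the resonance $k_i$ to arrange the lower bound uniformly.

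First, using the resonance identity $k_j\a \equiv a_j - \delta_j \bmod \Z$ (with $|\delta_j| = O(|k_j|^{-\infty})$ by the decay intrinsic to the KAM normal form) together with a one-step recursion of the form
\[
a_i - a_{i-1} = -(1-\cos\theta_{i-1})\,\delta_{i-1} + O(\delta_{i-1}^2),
\]
which is obtained by a direct calculation on $G_{i-1}(\.+\a)A_{i-1}G_{i-1}^*(\.)$ using the explicit form of $D_{i-1}$ with $\f_{i-1} = 0$, I would substitute to obtain
\[
k_i(k_i - k_{i-1})\a \equiv \cos\theta_{i-1}\,k_i\,\delta_{i-1} - k_i\,\delta_i \bmod \Z.
\]
The term $k_i\,\delta_i$ is negligible since $|\delta_i| = O(|k_i|^{-\infty})$, and because $\theta \in \ell^2$ the factor $\cos\theta_{i-1}$ is bounded below by a fixed positive constant for all large $i$. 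The condition \eqref{eqcond iterates} therefore reduces to a uniform lower bound of the form $|k_i\,\delta_{i-1}|_{\bmod\,\Z/2} \geq \e'$.

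Next, I would invoke the freedom in selecting the admissible resonance $k_i$ at step $i$: the KAM normal form only requires $k_i$ to lie in the set of integers $k$ for which $|a_i - k\a|_{\bmod\,\Z}$ falls below the prescribed threshold of step $i$. This admissible set is governed by the Diophantine condition on $\a$, and has controlled positive density in the integers. Within it, the quantity $k\,\delta_{i-1} \bmod \Z$ varies linearly in $k$ with slope $\delta_{i-1}$; a pigeonhole/density argument then selects an admissible $k_i$ whose image $k_i\,\delta_{i-1}\bmod \Z$ stays in a uniform neighborhood of, say, $1/4$, giving $|k_i\,\delta_{i-1}|_{\bmod\,\Z/2} \geq \e' > 0$ with $\e'$ independent of $i$.

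Finally, I would check that this modified choice is compatible with the remaining hypotheses of Proposition \ref{prop construction}: the angles $\theta_i$ and the phase $\f_i \equiv 0$ depend only on scalar data ($\hat{F}_i(k_i)$ and its argument) that can be prescribed independently of the precise choice of $k_i$ within the admissible set, so $\theta \in \ell^2$, $\sum \theta_i$ divergent, and $\f \equiv 0$ are all preserved. The main obstacle will be securing the uniformity of the constant $\e > 0$ across all $i$ — equivalently, showing that the subset of admissible resonances satisfying the non-return inequality does not thin out with $i$. This is precisely where the Diophantine exponent of $\a$ is used in an essential (quantitative) way.
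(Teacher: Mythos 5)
Your approach is genuinely different from the paper's and is an interesting attempt to prove the lemma in the form stated, rather than (as the paper effectively does) weakening it. The paper's proof does not try to control $n_i = k_i - k_{i-1}$ at all: it observes that $|n_i\alpha| \geq |k_i|^{-\tau}$ by the Diophantine condition, notes that this may still give $|k_i n_i \alpha|_{\bmod \Z/2} \to 0$ when $\tau>1$, and then simply \emph{replaces} $n_i$ by $k_i^{\tau_i}(k_i - k_{i-1})$ for a suitable integer power $\tau_i\leq\tau-1$ so that the quantity is pushed out of the ``small'' region. That is, the paper proves a modified version of the conclusion with a modified iterate sequence (and says explicitly afterwards that it will ``ignore the modification''), exploiting the Diophantine \emph{exponent} rather than any freedom in the resonance choice. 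Your proposal instead keeps $n_i = k_i - k_{i-1}$, expresses $k_i n_i\alpha \bmod \Z$ via a one-step recursion in terms of $\delta_{i-1} = a_{i-1} - k_{i-1}\alpha$, and tries to select $k_i$ so that $k_i\delta_{i-1}\bmod\Z$ lands near $1/4$.

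The genuine gap in your argument is the sentence ``This admissible set \dots has controlled positive density in the integers.'' That is not true. The admissibility constraint forces $|a_i - k\alpha|_{\bmod\Z}$ (equivalently, for the next resonance, $|k\alpha|_{\bmod\Z/2}$) to be super-polynomially small in the previous scale, of order $O(k_{i-1}^{-\infty})$ because $|\hat F_{i-1}(k_{i-1})|$ must decay super-polynomially and $\theta_{i-1}$ is prescribed. The set $\{k : |k\alpha|_{\bmod\Z} < \eta\}$ for a Diophantine $\alpha$ is an arithmetic-progression-like set with gap of size roughly $\eta^{-1/\tau}$; it has density $O(\eta)$, which tends to $0$ as $i\to\infty$, not positive density. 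Consequently a bare pigeonhole argument does not apply. What you would actually need is an equidistribution statement for $k\delta_{i-1}\bmod\Z$ as $k$ ranges over a sparse progression $\{r + m q : m_0 \leq m \leq M\}$ (with $q$ the relevant denominator of $\alpha$), which requires controlling $|mq\delta_{i-1}|_{\bmod\Z}$ — i.e.\ quantitative Diophantine information about $q\delta_{i-1}$, a quantity determined by the construction. This is not automatic, and you neither state nor prove it. In addition, the recursion $a_i - a_{i-1} = -(1-\cos\theta_{i-1})\delta_{i-1} + O(\delta_{i-1}^2)$ is asserted to follow from ``a direct calculation'' on $G_{i-1}(\cdot+\alpha)A_{i-1}G_{i-1}^*(\cdot)$, but you should verify that the $a_i$ appearing there is taken in the same normalization as the $a_i$ in the definition of $\delta_i$ (after, not before, absorbing the frequency shift by $B_{i-1}$); otherwise the reduction to $|k_i\delta_{i-1}|_{\bmod\Z/2}$ is not what you claim. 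To summarize: your reduction step is reasonable, but the key step that the admissible resonances are plentiful enough to run a pigeonhole selection is unjustified, and this is exactly where the paper chooses a different (and weaker-conclusion) route.
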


\begin{proof}
    It can be easily seen that the condition of eq. \eqref{eqcond iterates} is satisfied
by the cocycle as follows. Fix a sequence in
$(\theta_i )\in \ell ^{2}$. Then, at each step the resonant mode and the argument of
the matrix $A_i$ need to satisfy, by \cite{NKInvDist},
\begin{equation*}
\sqrt{(a_i - 2i\pi k_i \a)^2 + |\hat{F}_i(k_i)|^2} = |k_{i+1}\a|_{\bmod \Z /2}
\end{equation*}
where $k_{i+1}$ only subject to $k_{i+1} > k_i^{s_i}$ with $s_i \ra \infty$, and
\begin{equation*}
 \frac{|\hat{F}_i(k_i)|}{a_i - 2i\pi k_i \a} = \tan \theta _i
\end{equation*}
while $|\hat{F}_i(k_i)| = O(k_i^{-\infty})$. There are enough free parameters so that
the choice of eq. \eqref{eqcond iterates} be possible. We remind that $a_\infty$ is
a free parameter of the construction and that it satisfies
\begin{equation*}
a_\infty - k_i \a = O(|k_i^{-\infty}|)
\end{equation*}

In particular, disregarding the diophantine constant of $\a$ and some irrelevant
constants coming from asymptotics of the type $O(|k_i|^{-\infty})$ for simplicity, it
holds that
\begin{equation} \label{eqcvgc to 0}
|n_i \a | \geq |k_i|^{-\t}
\end{equation}
while, by the construction of the cocycle,
\begin{equation*}
|n_i \a | = O( |k_{i-1}|^{-\infty})
\end{equation*}
For the condition of eq. \eqref{eqcond iterates} to be satisfied, we need that
\begin{equation} \label{eqbounded from 0}
|n_i a_i |_{\bmod \Z /2} \approx |n_i k_i \a |_{\bmod \Z /2}
\end{equation}
be bounded away from $0$. If this is not the case already for $n_i$ as defined,
and $|n_i a_i | \ra 0$, using the Diophantine property of $\a$, we get that
\begin{equation*}
 |n_i k_i \a |_{\bmod \Z /2} = |k_i| |n_i \a |_{\bmod \Z /2}  \geq |k_i|^{-\t+1}
\end{equation*}
which holds because $ |n_i \a |_{\bmod \Z /2}$ is small. We can still have the
condition of eq. \eqref{eqcvgc to 0} together with the one of eq.
\eqref{eqbounded from 0} by choosing $n_i$ as
\begin{equation*}
k_i^{\t_i}( k_i - k_{i-1})
\end{equation*}
for some appropriate $0 \leq \t_i \leq \t -1$.
\end{proof}

We will ignore the modification introduced in the proof
of the lemma for simplicity in the proof of the proposition,
which we now present.

\begin{proof}
Firstly, the hypotheses on the summability properties of $\theta$ imply that the
cocycle is measurably reducible (by $\ell ^{2}$ summability, see \cite{NKInvDist}),
but not any more regular (the series $\theta$ is not summable, thus the series
$|k_i|^s \theta_i^2$ is not summable for any $s>0$). Consequently, the cocycle is
non-uniquely ergodic, and the transfer function is no more Sobolev regular than
measurable.

Since the angles $\theta$ are not summable, the sequence of partial sums
$\sum _{0}^{i}\theta _j$ has at least two accumulation points in $\mathbb{S}^{1}$. As a
result, the corresponding products $\prod _{1}^{i_l}D_j $ with $D_i$ as defined after
eq. \eqref{eqdef conjugations} have also at least two accumulation points. It is noted
that, since $\f_i \equiv 0$, the $D_i$ form a one-parameter subgroup of $SU(2)$
parametrized by the angle $\theta$.

Moreover, because of hypoellipticity, \cite{NKRotVec}, the rotation number of the
cocycle $a _\infty$ is Liouville with respect to $\a$, and the sequence of resonances
$k_i$ is actually a sequence of good approximations:
\begin{equation*}
|a _\infty - k_i \a |_{\Z /2} = O(|k_i|^{-\infty})
\end{equation*}
which implies that
\begin{equation*}
| (k_i - k_{i-1}) \a |_{\Z} = O(|k_{i-1}|^{-\infty})
\end{equation*}
i.e. $n_i$ are return times to $0$ for the rotation by $\a$.

Furthermore, $n_i \approx k_i$, since $k_i \gg k_{i-1}$. As a consequence,
\begin{equation*}
n_i F_i \ra 0
\end{equation*}
since $F_i =  O(|k_{i}|^{-\infty})$, but $n_i F_{i-1}$ is not guaranteed to converge
to $0$.\footnote{It can in fact be constructed to be so: $|\hat{F}_{i-1}(k_{i-1})| = n_i^{-1}$
is a valid choice for the resonant mode at $i-1$.}

Concerning the quantity
\begin{equation*}
A_{i}^{n_i} = \{ e^{2i\pi n_i k_i \a }, 0 \}
\end{equation*}
it is (or can be made to be) bounded away from $\pm \Id$, cf. the proof of lemma \ref{lemma modification}.
\end{proof}

The proof of the main theorem, thm. \ref{main theorem}, follows easily.

\begin{proof}[Proof of theorem \ref{main theorem}]
The construction is based on the asymptotic dynamical behavior of the cocycle.
Therefore, by modifying the normal form of any given cocycle starting from an
arbitrary high order of resonances, the construction of
proposition \ref{prop construction} can be applied, producing
an arbitrarily small perturbation of the given cocycle.
The cocycle has the desired dynamical
behavior by proposition \ref{prop transitivity}.
\end{proof}

\section{The optimal condition}

The optimal condition is not known, and is related with the accumulation points
of products of matrices of the type
\begin{equation*}
D_i =
\begin{pmatrix}
\cos \theta _i/2 & e^{2i\pi \f _i} \sin \theta _i/2
\\
- e^{-2i\pi \f _i} \sin \theta _i/2 & \cos \theta _i/2
\end{pmatrix}
\end{equation*}
under the condition that $\theta \in \ell ^{2}$.

In our construction we chose $\f _ i \equiv 0$ because this condition defines a one-parameter
subgroup of $SU(2)$ and the divergence of such products is particularly easy to
analyze. However, most choices of diverging sequences for the angle and
the phase should lead to such products non-converging. We think that minimal and
non-uniquely ergodic coycles are abundant within measurably reducible cocycles, which
form an $F_{\s}$-dense set in the total function space by \cite{NKContSpec}.
We actually think that measurable but not $C^{0}$ reducibility should
imply minimality.

In any case, the non-abelian construction makes it possible
to obtain Furstenberg counterexamples over Diophantine
rotations. The construction requires us to focus on the
non-abelian part of the dynamics, since an abelian cocycle
is automatically a smooth coboundary, and the KAM normal
form allows us to eliminate the abelian part of a non-abelian
cocycle. Then, the arithmetics of the fibered rotation
number come into play, which is in contrast with the abelian
construction. The construction is concluded by imposing a
delicate condition on the quantity $\theta $, which controls
the reducibility properties of the cocycle. The condition
is on the boundary of Unique Ergodicity, and moreover
assures that the cocycle is measurably but not $C^{0}$
reducible, which leaves the window open for minimality of
the dynamics, which is then established.

\bibliography{nikosbib}

\providecommand{\bysame}{\leavevmode\hbox to3em{\hrulefill}\thinspace}
\providecommand{\noopsort}[1]{}
\providecommand{\mr}[1]{\href{http://www.ams.org/mathscinet-getitem?mr=#1}{MR~#1}}
\providecommand{\zbl}[1]{\href{http://www.zentralblatt-math.org/zmath/en/search/?q=an:#1}{Zbl~#1}}
\providecommand{\jfm}[1]{\href{http://www.emis.de/cgi-bin/JFM-item?#1}{JFM~#1}}
\providecommand{\arxiv}[1]{\href{http://www.arxiv.org/abs/#1}{arXiv~#1}}
\providecommand{\doi}[1]{\url{http://dx.doi.org/#1}}
\providecommand{\MR}{\relax\ifhmode\unskip\space\fi MR }
\providecommand{\MRhref}[2]{%
  \href{http://www.ams.org/mathscinet-getitem?mr=#1}{#2}
}
\providecommand{\href}[2]{#2}
\begin{thebibliography}{Kar20}

\bibitem[AK70]{AnKat1970}
\bgroup\scshape{}D.~V. Anosov\egroup{} and \bgroup\scshape{}A.~B.
  Katok\egroup{}, New examples in smooth ergodic theory. {E}rgodic
  diffeomorphisms,  \emph{Trudy Moskov. Mat. Ob\v s\v c.} \textbf{23} (1970),
  3--36. \mr{0370662 (51 \#6888)}.

\bibitem[Eli02]{El2002a}
\bgroup\scshape{}L.~H. Eliasson\egroup{}, Ergodic skew-systems on ${T}^d \times
  {SO}(3,{R})$,  \emph{Ergodic Theory and Dynamical Systems} \textbf{22}
  (2002), 1429--1449.

\bibitem[Fur61]{Furst61}
\bgroup\scshape{}H.~Furstenberg\egroup{}, Strict ergodicity and transformations
  of the torus,  \emph{American Journal of Mathematics} \textbf{83} (1961),
  573--601.

\bibitem[Kar14]{NKInvDist}
\bgroup\scshape{}N.~Karaliolios\egroup{}, \emph{Invariant distributions for
  quasiperiodic cocycles in {$\Bbb{T}^d\times SU(2)$}}, 2014.
  \arxiv{1407.4763}.

\bibitem[Kar16]{NKPhD}
\bgroup\scshape{}N.~Karaliolios\egroup{}, Global aspects of the reducibility of
  quasiperiodic cocycles in semisimple compact {L}ie groups,  \emph{M\'em. Soc.
  Math. Fr. (N.S.)} (2016), 4+ii+200. \mr{3524104}.

\bibitem[Kar18]{NKContSpec}
\bgroup\scshape{}N.~Karaliolios\egroup{}, Continuous spectrum or measurable
  reducibility for quasiperiodic cocycles in {$\Bbb{T}^d\times SU(2)$},
  \emph{Comm. Math. Phys.} \textbf{358} (2018), 741--766. \mr{3774436}.

\bibitem[Kar20]{NKRotVec}
\bgroup\scshape{}N.~Karaliolios\egroup{}, Fibered rotation vector and
  hypoellipticity for quasi-periodic cocycles in compact lie groups,
  \emph{Bulletin of the London Mathematical Society} \textbf{52} (2020),
  224--236.

\bibitem[Kri99]{KrikAst}
\bgroup\scshape{}R.~Krikorian\egroup{}, R\'eductibilit\'e des syst\`emes
  produits-crois\'es \`a valeurs dans des groupes compacts,
  \emph{Ast\'erisque} (1999), vi+216. \mr{1732061 (2001f:37030)}.

\end{thebibliography}
\bibliographystyle{aomalpha}

\end{document}